\newtheorem{theorem}{Theorem}[section]
\newtheorem{lemma}[theorem]{Lemma}
\newtheorem{proposition}[theorem]{Proposition}
\newtheorem{corollary}[theorem]{Corollary}
\theoremstyle{definition}
\begin{document}
\title[The open-point and bi-point-open topologies on $C(X)$]{Completeness Properties of the open-point and bi-point-open topologies on $C(X)$}
\author{Anubha Jindal}
\address{Anubha Jindal: Department Of Mathematics, Indian Institute of Technology Delhi,
New Delhi 110016, India.}
\email{jindalanubha217@gmail.com}

\author{R. A. McCoy}
\address{R. A. McCoy: Department of Mathematics, Virginia Tech, Blacksburg VA 24061-0123, U.S.A.}
\email{mccoy@math.vt.edu}

\author{S. Kundu}
\address{S. Kundu:  Department Of Mathematics, Indian Institute of Technology Delhi,
New Delhi 110016, India.}
\email{skundu@maths.iitd.ac.in}

\author{Varun Jindal}
\address{Varun Jindal: Department Of Mathematics, Malaviya National Institute of Technology Jaipur,
Jaipur, Rajasthan 302017, India.}
\email{vjindal.maths@mnit.ac.in}
\subjclass[2010]{Primary 54C35; Secondary 54E18, 54E35, 54E50, 54E52, 54E99}
\keywords{Point-open topology, Open-point topology, Bi-point-open topology, completely metrizable, $\check{C}$ech-complete, locally $\check{C}$ech-complete, sieve-complete, partition-complete, pseudocomplete, Baire space}

\begin{abstract}
This paper studies various completeness properties of the open-point and bi-point-open topologies on the space $C(X)$ of all real-valued continuous functions on a Tychonoff space $X$. The properties range from complete metrizability to the Baire space property.
\end{abstract}
\maketitle
\section{\bf Introduction}
The set $C(X)$ of all real-valued continuous functions on a Tychonoff space $X$ has a number of natural topologies. One important topology on $C(X)$ is a set-open topology, introduced by Arens and Dugundji in \cite{AD}. Among the set-open topologies on $C(X)$, the point-open and compact-open topologies are most useful and frequently studied from different perspectives. The point-open topology $p$ is also known as the topology of pointwise convergence. The study of pointwise convergence of sequences of functions is as old as the calculus. The topological properties of $C_p(X)$, the space $C(X)$ equipped with the topology $p$, have been studied particularly in \cite{arhan}, \cite{mn}, \cite{tka}, \cite{tka1} and \cite{tka2}.

In the definition of a set-open topology on $C(X)$, we use a certain family of subsets of $X$ and open subsets of $\mathbb{R}$. Occasionally, there have been attempts, such as in \cite{pk}, to define a new type of set-open topology on $C(X)$. But even these attempts did not help to move much away from the traditional way of defining the set-open topologies on $C(X)$. So in \cite{AMK}, by adopting a radically different approach, two new kinds of topologies called the open-point and bi-point-open topologies on $C(X)$ have been defined. One main reason for adopting such a different approach is to ensure that both $X$ and $\mathbb{R}$ play equally significant roles in the construction of topologies on $C(X)$. This gives a function space where the functions get more involved in the behavior of the topology defined on $C(X)$.

The point-open topology on $C(X)$ has a subbase consisting of sets of the form
$$[x, V]^+ = \{f \in C(X) : f(x) \in V \},$$
where $x\in X$ and $V$ is an open subset of $\mathbb{R}$. The open-point topology on $C(X)$
has a subbase consisting of sets of the form $$[U, r]^- = \{f \in C(X) : f^{-1}(r) \cap U\neq\emptyset\},$$ where $U$ is an open subset of $X$ and $r \in \mathbb{R}$. The open-point topology on $C(X)$ is denoted by $h$ and
the space $C(X)$ equipped with the open-point topology $h$ is denoted by $C_h(X)$. The term $``h"$ comes from the word
``horizontal" because, in the case of $C_h(\mathbb{R})$ a subbasic open set can be viewed as the set of functions in $C(\mathbb{R})$ whose
graphs pass through some given horizontal open segment in $\mathbb{R}\times \mathbb{R}$, as opposed to a subbasic open set
in $C_p(\mathbb{R})$ which consists of the set of functions in $C(\mathbb{R})$ whose graphs pass through some given vertical
open segment in $\mathbb{R} \times \mathbb{R}$.

The bi-point-open topology on $C(X)$ is the join of the point-open topology $p$ and the open-point topology $h$. In other words, it is the topology having subbasic open sets of both kinds: $[x,V]^{+}$ and $[U,r]^{-}$, where $x\in X$ and $V$ is an open subset of $\mathbb{R}$, while $U$ is an open subset of $X$ and $r \in \mathbb{R}$. The bi-point-open topology on the space $C(X)$ is denoted by $ph$ and the space $C(X)$ equipped with the bi-point-open topology $ph$ is denoted by $C_{ph}(X)$. One can also view the bi-point-open topology on $C(X)$ as the weak topology on $C(X)$ generated by the identity maps $id_1:C(X)\rightarrow C_p(X)$ and $id_2:C(X)\rightarrow C_h(X)$.

In \cite{AMK} and \cite{AMK1}, in addition to studying some basic properties of the spaces $C_h(X)$ and $C_{ph}(X)$, the submetrizability, metrizability, separability and some cardinal functions on $C_h(X)$ and $C_{ph}(X)$ have been studied. But another important family of properties, the completeness properties of $C_{h}(X)$ and $C_{ph}(X)$, is yet to be studied. In this paper, we plan to do exactly that. More precisely, we study the completeness properties such as complete metrizability, $\check{C}$ech-completeness, local $\check{C}$ech-completeness, sieve-completeness, partition-completeness, pseudocompleteness and property of being a Baire space of the spaces $C_h(X)$ and $C_{ph}(X)$. We see that the completeness of $C_h(X)$ and $C_{ph}(X)$ behave like that of $C_p(X)$. Here we would like to recall that the completeness properties of $C_p(X)$ were studied in \cite{LM}.

In this paper, we use the following conventions. The symbols $\mathbb{R}$, $\mathbb{Q}$, $\mathbb{Z}$ and $\mathbb{N}$ denote the space of real numbers, rational numbers, integers and natural numbers, respectively. For a space $X$ the symbol $X^0$ denotes the set of all isolated points in $X$, $|X|$ denotes the cardinality of the space $X$, $\overline{A}$ denotes the closure of $A$ in $X$, $A^c$ denotes the complement of $A$ in $X$, and $0_X$ denotes the constant zero function in $C(X)$. Also for any two topological spaces $X$ and $Y$ that have the same underlying set, the three expressions $X=Y$, $X \leq Y$ and $X < Y$ mean that, respectively, the topology of $X$ is same as topology of $Y$, the topology of $X$ is weaker than or equal to topology of $Y$ and the topology of $X$ is  strictly weaker than the topology of $Y$. For other basic topological notions, one can see \cite{re}.

\section{\bf Metrizability of $C_h(X)$ and $C_{ph}(X)$}
In this section, we show that a number of properties of the spaces $C_h(X)$ and $C_{ph}(X)$ are equivalent to their metrizability. Some of these properties will be used in the sequel. We first define these properties.

A space $X$ is said to have countable pseudocharacter if each point in $X$ is a $G_{\delta}$-set. A subset $S$ of a space $X$ is said to have {\it{countable character}}\index{countable character} if there exists a sequence $\{W_n: n \in \mathbb{N}\}$ of open subsets in $X$ such that $S \subseteq W_n$ for all $n$ and if $W$ is any open set containing $S$, then $W_n \subseteq W$ for some $n$.

A space $X$ is said to be of \textit{countable type}\index{countable type}  (\textit{pointwise countable type})\index{pointwise countable type} if each compact set (point) is contained in a compact set having countable character. Clearly, every first countable space is of pointwise countable type.

A property weaker than being a space of pointwise countable type is that of being an r-space. A space $X$ is an \textit{r-space}\index{r-space} if each point of $X$ has a sequence $\{V_n:n\in \mathbb{N}\}$ of neighborhoods with the property that if $x_n\in V_n$ for each $n$, then the set $\{x_n:n\in \mathbb{N}\}$ is contained in a compact subset of $X$. Another property weaker than being an r-space is that of being a q-space. A space $X$ is a {\it{q-space}}\index{q-space} if for each point $x\in X$, there exists a sequence $\{U_n:n\in \mathbb{N}\}$ of neighborhoods of $x$ such that if $x_n\in U_n$ for each $n$, then $\{x_n:n\in \mathbb{N}\}$ has a cluster point.\\

Another property stronger than being a $q$-space is that of being an {\it{M-space}}\index{M-space}, which can be characterized as a space that can be mapped onto a metric space by a {\it{quasi-perfect map}}\index{quasi-perfect map} (a continuous closed map in which inverse images of points are countably compact). A space $X$ is called a {\it{p-space}}\index{p-space} if there exists a sequence $(\mathcal{U}_n)$ of families of open sets in a compactification of $X$ such that each $\mathcal{U}_n$ covers $X$ and $\bigcap_n\bigcup\{U \in \mathcal{U}_n : x \in U\} \subseteq X$ for any $x \in X$.\\
A metrizable space is of countable type and a space of pointwise countable type is an $r$-space. Also every metrizable space is a p-space and every p-space is a q-space.

In order to relate the metrizability of the spaces $C_h(X)$ and $C_{ph}(X)$ with the topological properties discussed above, we need the following
known lemma, the proof of which is omitted.
\begin{lemma}\label{pointwise_lemma}
Let $D$ be a dense subsets of a space $X$ and $A$ be a compact
subset of $D$. Then $A$ has countable character in $D$ if and only
if $A$ has countable character in $X$.
\end{lemma}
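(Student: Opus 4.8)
The plan is to prove the two implications separately; the forward direction (countable character in $X$ implies countable character in $D$) is routine and uses neither compactness nor density, while the reverse direction carries the real content.

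For the easy direction, suppose $\{W_n : n \in \mathbb{N}\}$ is a sequence of open subsets of $X$ witnessing that $A$ has countable character in $X$. I would take $\{W_n \cap D : n \in \mathbb{N}\}$ as the witnessing sequence in $D$. Indeed $A \subseteq W_n \cap D$ for all $n$ since $A \subseteq D$, and if $W'$ is open in $D$ with $A \subseteq W'$, then writing $W' = W \cap D$ for some open $W$ in $X$ we get $A \subseteq W$, hence $W_n \subseteq W$ for some $n$, and therefore $W_n \cap D \subseteq W'$. This step uses only that open subsets of $D$ are traces of open subsets of $X$ together with $A \subseteq D$.

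For the reverse direction, let $\{V_n : n \in \mathbb{N}\}$ witness that $A$ has countable character in $D$, and for each $n$ choose an open set $G_n$ in $X$ with $V_n = G_n \cap D$. I claim $\{G_n : n \in \mathbb{N}\}$ witnesses countable character of $A$ in $X$. First, $A \subseteq V_n \subseteq G_n$ for every $n$. Now let $W$ be open in $X$ with $A \subseteq W$. Since $A$ is compact and $X$ is regular, there is an open set $O$ with $A \subseteq O \subseteq \overline{O} \subseteq W$ (cover $A$ by finitely many open sets whose closures lie in $W$, and take their union). Then $O \cap D$ is an open subset of $D$ containing $A$, so $V_n \subseteq O \cap D$ for some $n$, i.e. $G_n \cap D \subseteq O$. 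At this point density enters: because $G_n$ is open and $D$ is dense, one has $G_n \subseteq \overline{G_n \cap D}$, and therefore $G_n \subseteq \overline{G_n \cap D} \subseteq \overline{O} \subseteq W$, which is exactly what is needed.

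The step I expect to be the crux is the reverse direction, and specifically the realization that the sets $G_n$ cannot be tested directly against an arbitrary open $W \supseteq A$, since the part of $G_n$ lying outside $D$ is uncontrolled; the remedy is to first interpose an open set $O$ whose closure is trapped inside $W$, which is possible precisely because $A$ is compact and $X$ is regular, and then to use the density relation $G_n \subseteq \overline{G_n \cap D}$ to recapture the exterior part of $G_n$. The verifications that $G_n \subseteq \overline{G_n \cap D}$ for open $G_n$ when $D$ is dense, and that a compact subset of a regular space can be surrounded by an open set with closure inside a given neighborhood, are the only non-formal points, and both are standard.
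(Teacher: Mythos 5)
Your proof is correct; the paper itself omits the proof of this lemma, describing it only as ``known,'' and your argument is the standard one: the forward direction by tracing the $W_n$ onto $D$, and the reverse direction by interposing an open $O$ with $A \subseteq O \subseteq \overline{O} \subseteq W$ (compactness plus regularity) and then using $G_n \subseteq \overline{G_n \cap D}$ (openness of $G_n$ plus density of $D$). The only point worth flagging is that the reverse direction needs $X$ regular, which is not literally in the lemma's hypotheses but is covered by the paper's standing Tychonoff convention and certainly holds in the intended application, where $X$ is one of the topological groups $C_h(X)$ or $C_{ph}(X)$.
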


By Theorems 3.7 and 3.8 in \cite{AMK}, if $X^0$ is $G_{\delta}$-dense in $X$, then the spaces $C_{h}(X)$ and $C_{ph}(X)$ are topological groups and hence homogeneous. A space $X$ is called {\it{homogeneous}}\index{homogeneous space} if for every pair of points $x,\ y$ in $X$, there exists a homeomorphism of $X$ onto $X$ itself which carries $x$ to $y$.  So Lemma \ref{pointwise_lemma} can be used to prove the following result.

\begin{proposition}\label{dense_subspace_of_pointwise_countable_type}
If $X^0$ is $G_\delta$-dense in $X$, then $C_{\tau}(X)$, where $\tau = h, ph$, is of pointwise countable type if and only if
$C_{\tau}(X)$, where $\tau = h, ph$, has a dense subspace of pointwise countable type.
\end{proposition}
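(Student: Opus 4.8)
My plan is as follows. The forward implication needs no argument: $C_\tau(X)$ is a dense subspace of itself, so if it is of pointwise countable type then it trivially has a dense subspace of pointwise countable type, and this direction does not even use the hypothesis that $X^0$ is $G_\delta$-dense in $X$.

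For the converse I would first exhibit a single point of $C_\tau(X)$ that lies in a compact set of countable character, and then propagate this to every point using homogeneity. Concretely: let $D$ be a dense subspace of $C_\tau(X)$ that is of pointwise countable type. Since $D$ is dense it is nonempty, so fix $f \in D$; by pointwise countable type of $D$ there is a compact set $A \subseteq D$ with $f \in A$ having countable character in $D$. Because compactness is intrinsic, $A$ is also compact as a subspace of $C_\tau(X)$, and Lemma \ref{pointwise_lemma}, applied with the ambient space taken to be $C_\tau(X)$ and its dense subset $D$, upgrades this to: $A$ has countable character in $C_\tau(X)$. So $f$ lies in a compact subset of $C_\tau(X)$ of countable character.

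Next I would invoke the hypothesis. Since $X^0$ is $G_\delta$-dense in $X$, Theorems 3.7 and 3.8 of \cite{AMK} guarantee that $C_\tau(X)$ is a topological group, and hence homogeneous. Given an arbitrary $g \in C_\tau(X)$, choose a self-homeomorphism $\varphi$ of $C_\tau(X)$ with $\varphi(f) = g$; then $\varphi(A)$ is compact, contains $g$, and has countable character in $C_\tau(X)$, since both compactness and countable character are preserved by homeomorphisms. Therefore every point of $C_\tau(X)$ lies in a compact set of countable character, which is precisely the assertion that $C_\tau(X)$ is of pointwise countable type.

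The substance of the argument is the passage of countable character from $D$ to $C_\tau(X)$, i.e.\ Lemma \ref{pointwise_lemma}; the rest is the observation that compactness and countable character are topological properties together with the routine reduction to a single point via homogeneity. I do not expect a genuine obstacle here, the only point requiring care being that the compact set furnished by the pointwise countable type of $D$ must be chosen to lie inside $D$ so that Lemma \ref{pointwise_lemma} applies verbatim.
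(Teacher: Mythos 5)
Your argument is correct and is exactly the route the paper intends: the forward direction is trivial, and the converse combines Lemma \ref{pointwise_lemma} (to transfer countable character of a compact set from the dense subspace $D$ to $C_\tau(X)$) with the homogeneity of $C_\tau(X)$ coming from Theorems 3.7 and 3.8 of \cite{AMK} under the hypothesis that $X^0$ is $G_\delta$-dense. The paper omits the proof but its preceding discussion prescribes precisely these two ingredients, so there is nothing to add.
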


\begin{theorem}\label{metrizability of C_h(X)}
If $X^0$ is $G_\delta$-dense in $X$, then the following are equivalent.
\begin{enumerate}
 \item [$(a)$] $C_h(X)$ is metrizable.
 \item [$(b)$] $C_h(X)$ is first countable.
 \item [$(c)$] $C_h(X)$ has countable pseudocharacter.
 \item [$(d)$] $X$ has a countable $\pi$-base.
 \item [$(e)$] $X$ is a countable discrete space.
 \item [$(f)$] $C_p(X)$ is metrizable.
 \item [$(g)$] $C_h(X)$ is of countable type.
 \item [$(h)$] $C_h(X)$ is of pointwise countable type.
 \item [$(i)$] $C_h(X)$ has a dense subspace of pointwise countable type.
 \item [$(j)$] $C_h(X)$ is an r-space.
 \item [$(k)$] $C_h(X)$ is an M-space.
 \item [$(l)$] $C_h(X)$ is a p-space.
 \item [$(m)$] $C_h(X)$ is a q-space.
\end{enumerate}
\end{theorem}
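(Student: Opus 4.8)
The plan is to prove all thirteen conditions equivalent by threading a single cycle of implications through the list, using throughout that, under the standing hypothesis, $C_h(X)$ is a $T_1$ topological group --- a topological group by the results of \cite{AMK} quoted before Proposition~\ref{dense_subspace_of_pointwise_countable_type}, and always $T_1$ since for $f\neq g$ in $C(X)$ one may pick $x_0$ with $f(x_0)\neq g(x_0)$ and an open $U\ni x_0$ on which $g$ avoids the value $f(x_0)$, so that $f\in[U,f(x_0)]^-\not\ni g$ (and symmetrically). Most arrows are then immediate. A topological group with countable pseudocharacter is first countable, and a first countable $T_1$ topological group is metrizable, so $(a)\Leftrightarrow(b)\Leftrightarrow(c)$. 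A metrizable space is of countable type, a space of countable type is of pointwise countable type (points being compact), a space of pointwise countable type is an $r$-space, and an $r$-space is a $q$-space, yielding $(a)\Rightarrow(g)\Rightarrow(h)\Rightarrow(j)\Rightarrow(m)$, while $(h)\Leftrightarrow(i)$ is Proposition~\ref{dense_subspace_of_pointwise_countable_type}. As every metrizable space is also an $M$-space (via the identity map onto itself) and a $p$-space, each of which is a $q$-space, we get $(a)\Rightarrow(k)\Rightarrow(m)$ and $(a)\Rightarrow(l)\Rightarrow(m)$. Finally, when $X$ is a countable discrete space $C(X)=\mathbb R^X$ and, since $[U,r]^-=\bigcup_{x\in U}[\{x\},r]^-$ with $[\{x\},r]^-=\{f\in C(X):f(x)=r\}$, the topology $h$ is just the product topology on $\mathbb R^X$ with each factor discrete, a countable product of metrizable spaces and hence metrizable; thus $(e)\Rightarrow(a)$, and trivially $(e)\Rightarrow(f)$ because $C_p(X)=\mathbb R^X$ is metrizable. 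It therefore remains to establish $(c)\Rightarrow(d)$, $(d)\Rightarrow(e)$, $(f)\Rightarrow(e)$, and $(m)\Rightarrow(d)$.

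For $(c)\Rightarrow(d)$: countable pseudocharacter makes $\{0_X\}$ a $G_\delta$ set, and since $0_X\in[U,r]^-$ forces $r=0$, shrinking to basic neighbourhoods gives $\{0_X\}=\bigcap_n\bigcap_{i\le k_n}[U_{n,i},0]^-$ for some nonempty open subsets $U_{n,i}$ of $X$. Then the countable family $\mathcal B=\{U_{n,i}:n\in\mathbb N,\ i\le k_n\}$ is a $\pi$-base for $X$: if some nonempty open $V$ contained no member of $\mathcal B$, then, choosing $y\in V$ and a continuous $\phi:X\to[0,1]$ with $\phi(y)=1$ and $\phi\equiv0$ on $X\setminus V$ (complete regularity), every $U_{n,i}$ would meet $X\setminus V$, so $\phi$ would attain the value $0$ on each $U_{n,i}$, giving $\phi\in\{0_X\}$ and contradicting $\phi(y)=1$. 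For $(d)\Rightarrow(e)$: a countable $\pi$-base must contain $\{x\}$ for each isolated point $x$, so $X^0$ is countable; being a countable union of closed singletons it is $F_\sigma$, hence $X\setminus X^0$ is $G_\delta$, and $G_\delta$-density of $X^0$ forces $X\setminus X^0=\emptyset$, so $X=X^0$ is discrete and countable. For $(f)\Rightarrow(e)$: metrizability of $C_p(X)$ forces $X$ to be countable, and then the same $F_\sigma$--$G_\delta$ argument, using $G_\delta$-density of $X^0$, shows $X$ is countable and discrete.

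The main step is $(m)\Rightarrow(d)$. Assume $C_h(X)$ is a $q$-space and fix neighbourhoods $\mathcal W_1,\mathcal W_2,\dots$ of $0_X$ such that every choice $f_n\in\mathcal W_n$ has a cluster point; shrinking to basic neighbourhoods we may take $\mathcal W_n=\bigcap_{i\le k_n}[U_{n,i},0]^-$ with the $U_{n,i}$ nonempty open. I claim the countable family $\mathcal B=\{U_{n,i}:n,i\}$ is a $\pi$-base. If not, choose a nonempty open $V$ containing no member of $\mathcal B$; density of $X^0$ (a consequence of its $G_\delta$-density) supplies an isolated point $y\in V$. Let $f_n\in C(X)$ be given by $f_n(y)=n$ and $f_n\equiv0$ off $y$ (continuous, since $\{y\}$ is clopen). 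Each $U_{n,i}$ meets $X\setminus V\subseteq X\setminus\{y\}$, where $f_n$ vanishes, so $f_n\in\mathcal W_n$; hence the infinite set $\{f_n:n\in\mathbb N\}$ has a cluster point $g\in C(X)$. Since $C_h(X)$ is $T_1$, every neighbourhood of $g$ contains infinitely many $f_n$; but $g\in[\{y\},g(y)]^-$, and $f_n\in[\{y\},g(y)]^-$ forces $f_n(y)=g(y)$, i.e.\ $n=g(y)$, which holds for at most one $n$. This contradiction shows $\mathcal B$ is a countable $\pi$-base, so $(d)$ holds.

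These arrows close the cycle, each of the thirteen conditions being sandwiched between $(a)$ and one of the nodes $(d)$, $(m)$ that route back to $(a)$ (with $(e)\Leftrightarrow(f)$). I expect the construction in $(m)\Rightarrow(d)$ of a test sequence without a cluster point to be the real obstacle: it is exactly there that $G_\delta$-density of $X^0$ is used, through the isolated point inside the ``missing'' open set $V$ on which the functions $f_n$ are concentrated --- without such a point the subbasic neighbourhood $[\{y\},g(y)]^-$ is unavailable and the argument breaks down.
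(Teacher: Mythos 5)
Your proof is correct and in substance follows the same route as the paper's: the paper delegates $(a)\Leftrightarrow(b)\Leftrightarrow(d)\Leftrightarrow(e)\Leftrightarrow(f)$ and $(c)\Rightarrow(d)$ to Theorem 4.6 and Proposition 4.5 of \cite{AMK} and only writes out $(m)\Rightarrow(e)$ in full, whereas you reprove those cited facts from scratch; your key construction for $(m)\Rightarrow(d)$ --- functions equal to $n$ at an isolated point $y$ lying in an open set that misses all the countably many test sets $U_{n,i}$ and vanishing elsewhere, defeated at a purported cluster point $g$ by the subbasic neighbourhood $[\{y\},g(y)]^-$ --- is exactly the paper's device for $(m)\Rightarrow(e)$, merely routed through the countable $\pi$-base of $(d)$ instead of directly to countability of $X^0$. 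One assertion you make is false, however: a topological group with countable pseudocharacter need not be first countable; for instance $C_p([0,1])$ is a topological group in which $\{0_X\}=\bigcap\{[q,(-\frac{1}{n},\frac{1}{n})]^+:q\in\mathbb{Q}\cap[0,1],\ n\in\mathbb{N}\}$ is a $G_\delta$ (so every point is, by homogeneity), yet $C_p([0,1])$ is not first countable since $[0,1]$ is uncountable. Fortunately that claim is used only for the shortcut $(c)\Rightarrow(b)$, which is redundant in your scheme because you independently establish $(c)\Rightarrow(d)\Rightarrow(e)\Rightarrow(a)\Rightarrow(b)$; once that sentence is deleted every needed implication is still covered and the argument stands.
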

\begin{proof}
\noindent The equivalences $(a)\Leftrightarrow (b) \Leftrightarrow (d)\Leftrightarrow (e)\Leftrightarrow (f)$ follow from Theorem 4.6 in \cite{AMK}.

\noindent $(b)\Rightarrow (c)$. Immediate.

\noindent $(c)\Rightarrow (d)$. This follows from Proposition 4.5 in \cite{AMK}.

\noindent $(a)\Rightarrow (g)\Rightarrow (h)\Rightarrow (j)\Rightarrow (m)$, $(a)\Rightarrow (k)\Rightarrow (m)$ and $(a)\Rightarrow (l)\Rightarrow (m)$ follow from the previous discussion. Also $(h)\Leftrightarrow (i)$ follows from Proposition \ref{dense_subspace_of_pointwise_countable_type}.

\noindent $(m)\Rightarrow (e)$. Since $X^0$ is dense in $X$, the constant zero function $0_X$ has a sequence $\{B_n:n\in \mathbb{N}\}$ of basic neighborhoods of the form $B_n=[\{x^n_1\},0]^-\cap\ldots\cap [\{x^n_{r_n}\},0]^-$ which satisfies the definition of $q$-space at $0_X$. Suppose that there exists $x_0\in X^0\setminus \cup\{A_n:n\in \mathbb{N}\}$, where $A_n=\{x^n_1,\ldots, x^n_{r_n}\}$. Then for each $n\in \mathbb{N}$, there is a $g_n\in C(X)$ such that $g_n(x_0)=n$ and $g_n(x)=0$ for each $x\in A_n$. Each $g_n\in B_n$. But $\{g_n:n\in \mathbb{N}\}$ does not cluster in $C_h(X)$, because for any $g\in C(X)$, $[\{x_0\},g(x_0)]^-$ is a neighborhood of $g$ in $C_h(X)$ which contains at most one member of the sequence $(g_n)_{n\in \mathbb{N}}$. Hence $X^0$ is countable. Since $X^0$ is $G_\delta$-dense, $X$ is a countable discrete space.
\end{proof}

\begin{theorem}\label{metrizability of C_ph(X)}
If $X^0$ is $G_\delta$-dense in $X$, then the following are equivalent.
\begin{enumerate}
  \item [$(a)$] $C_{ph}(X)$ is metrizable.
  \item [$(b)$] $C_{ph}(X)$ is first countable.
  \item [$(c)$] $C_{ph}(X)$ has countable pseudocharacter.
  \item [$(d)$] $X$ is separable.
  \item [$(e)$] $X$ is a countable discrete space.
  \item [$(f)$] $C_p(X)$ is metrizable.
  \item [$(g)$] $C_{ph}(X)$ is of countable type.
  \item [$(h)$] $C_{ph}(X)$ is of pointwise countable type.
  \item [$(i)$] $C_{ph}(X)$ has a dense subspace of pointwise countable type.
 \item [$(j)$] $C_{ph}(X)$ is an r-space.
 \item [$(k)$] $C_{ph}(X)$ is an M-space.
 \item [$(l)$] $C_{ph}(X)$ is a p-space.
 \item [$(m)$] $C_{ph}(X)$ is a q-space.
\end{enumerate}
\end{theorem}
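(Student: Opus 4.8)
The plan is to follow the pattern of the proof of Theorem \ref{metrizability of C_h(X)}, since the two statements are nearly identical and most implications are proved in the same way. The equivalences $(a)\Leftrightarrow(b)\Leftrightarrow(d)\Leftrightarrow(e)\Leftrightarrow(f)$ should be quoted from the metrizability characterization of $C_{ph}(X)$ in \cite{AMK} (the bi-point-open analogue of Theorem 4.6 there), with $(b)\Rightarrow(c)$ immediate and $(c)\Rightarrow(d)$ from the corresponding proposition in \cite{AMK}. The chain $(a)\Rightarrow(g)\Rightarrow(h)\Rightarrow(j)\Rightarrow(m)$, together with $(a)\Rightarrow(k)\Rightarrow(m)$ and $(a)\Rightarrow(l)\Rightarrow(m)$, is exactly the list of general-topology implications recalled at the start of Section 2 (metrizable $\Rightarrow$ countable type $\Rightarrow$ pointwise countable type $\Rightarrow$ $r$-space $\Rightarrow$ $q$-space, and metrizable $\Rightarrow$ $M$-space, $p$-space $\Rightarrow$ $q$-space), while $(h)\Leftrightarrow(i)$ is Proposition \ref{dense_subspace_of_pointwise_countable_type}, which applies because $X^0$ is $G_\delta$-dense. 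Thus the only implication needing a genuine argument is $(m)\Rightarrow(e)$. (If $(d)\Leftrightarrow(e)$ is not already part of the cited result it is immediate: a countable dense subset of $X$ must contain every isolated point, so separability makes $X^0$ countable, and then $X=X^0$ by the argument below; $(e)\Rightarrow(d)$ is trivial.)

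For $(m)\Rightarrow(e)$, assume $C_{ph}(X)$ is a $q$-space and fix neighborhoods $\{U_n:n\in\mathbb{N}\}$ of $0_X$ witnessing the $q$-property at $0_X$. Inside each $U_n$ choose a basic $ph$-neighborhood of $0_X$; since it contains $0_X$, each of its $h$-type factors has the form $[U,0]^-$ (with $U$ nonempty open) and each of its $p$-type factors the form $[z,V]^+$ with $0\in V$. Using that $X^0$ is dense, replace each such $U$ by an isolated point $x\in U\cap X^0$, noting $[\{x\},0]^-\subseteq[U,0]^-$; so we may assume the neighborhood equals $\bigcap_{i=1}^{r_n}[\{x^n_i\},0]^-\cap\bigcap_{j=1}^{s_n}[z^n_j,V^n_j]^+$ with $x^n_i\in X^0$ and $0\in V^n_j$. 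Put $D=\bigcup_n(\{x^n_1,\dots,x^n_{r_n}\}\cup\{z^n_1,\dots,z^n_{s_n}\})$, a countable set, and suppose some isolated point $x_0$ lies outside $D$. Since $X$ is $T_1$, $\{x_0\}$ is clopen, so $g_n:=n\cdot\chi_{\{x_0\}}\in C(X)$; because $x_0\notin D$ we get $g_n(x^n_i)=0$ and $g_n(z^n_j)=0\in V^n_j$, so $g_n$ lies in the chosen neighborhood and hence in $U_n$. But $(g_n)$ has no cluster point in $C_{ph}(X)$: for any $g\in C(X)$ the set $[\{x_0\},g(x_0)]^-$ is $h$-subbasic, hence $ph$-open, is a neighborhood of $g$, and contains $g_n$ only when $g_n(x_0)=n=g(x_0)$, i.e. for at most one $n$. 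This contradicts the $q$-property, so no such $x_0$ exists; thus $X^0\subseteq D$ is countable. Finally $X\setminus X^0=\bigcap_{x\in X^0}(X\setminus\{x\})$ is a $G_\delta$-set disjoint from $X^0$ (each $\{x\}$ with $x\in X^0$ is clopen), so $G_\delta$-density of $X^0$ forces $X\setminus X^0=\emptyset$, and $X$ is a countable discrete space.

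The only place this differs from the proof of Theorem \ref{metrizability of C_h(X)} is the appearance of the point-open factors $[z^n_j,V^n_j]^+$ in a basic $ph$-neighborhood, which impose finitely many extra constraints $g_n(z^n_j)\in V^n_j$ for each $n$. I expect this to be the sole (and minor) obstacle, and it is handled by the observation that these factors enlarge $D$ by only countably many points: an isolated point $x_0\notin D$ still exists whenever $X^0$ is uncountable, and at such a point $g_n$ is free to take the unbounded values $n$, exactly as in the $C_h$ argument. Everything else is routine bookkeeping.
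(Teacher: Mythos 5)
Your proposal is correct and follows essentially the same structure as the paper's proof: the same decomposition into cited equivalences, the chain of standard implications ending in $q$-space, Proposition \ref{dense_subspace_of_pointwise_countable_type} for $(h)\Leftrightarrow(i)$, and the same diagonal construction of functions $g_n$ vanishing on the finitely many relevant points of $B_n$ but equal to $n$ at a point outside their countable union, which cannot cluster. The only (harmless) variation is that the paper proves $(m)\Rightarrow(d)$ by showing all of $X$ is countable, using the point-open neighborhood $[x_0,(g(x_0)-\tfrac12,g(x_0)+\tfrac12)]^+$ at an arbitrary point, whereas you shrink to isolated points and conclude only that $X^0$ is countable before invoking $G_\delta$-density.
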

\begin{proof}
\noindent The equivalences $(a)\Leftrightarrow (b) \Leftrightarrow (e)\Leftrightarrow (f)$ follow from Theorem 4.10 in \cite{AMK} and Theorem \ref{metrizability of C_h(X)}.

\noindent $(b)\Rightarrow (c)$. Immediate.

\noindent $(c)\Rightarrow (d)$. This follows from Proposition 4.5 in \cite{AMK}.

\noindent $(c)\Rightarrow (d)$. This follows from Theorem 4.8 in \cite{AMK1}.

\noindent $(d)\Rightarrow (e)$. If $X$ is separable, then $X^0$ is countable. Consequently, $X$ is a countable discrete space.

\noindent $(a)\Rightarrow (g)\Rightarrow (h)\Rightarrow (j)\Rightarrow (m)$, $(a)\Rightarrow (k)\Rightarrow (m)$ and $(a)\Rightarrow (l)\Rightarrow (m)$ follow from the previous discussion. Also $(h)\Leftrightarrow (i)$ follows from Proposition \ref{dense_subspace_of_pointwise_countable_type}.

\noindent $(m)\Rightarrow (d)$. Let $\{B_n:n\in \mathbb{N}\}$ be a sequence of basic neighborhoods of $0_X$ in $C_{ph}(X)$ which satisfies the definition of $q$-space at $0_X$. Without loss of generality, we can assume that each $B_n$ is of the form $[y^n_1,V^n_1]^+\cap\ldots\cap[y^n_{m_n},V^n_{m_n}]^+\cap[U^n_1,0]^-\cap\ldots\cap [U^n_{r_n},0]^-$. Choose $x^n_i\in U^n_i$ for $1\leq i\leq r_n$. Suppose that there exists $x_0\in X\setminus \cup\{A_n:n\in \mathbb{N}\}$, where $A_n=\{y^n_1,\ldots, y^n_{m_n}, x^n_1,\ldots, x^n_{r_n}\}$. Then for each $n\in \mathbb{N}$, there is a $g_n\in C(X)$ such that $g_n(x_0)=n$ and $g_n(x)=0$ for each $x\in A_n$. Each $g_n\in B_n$. But $\{g_n:n\in \mathbb{N}\}$ does not cluster in $C_{ph}(X)$, because for any $g\in C(X)$, $[x_0,(g(x_0)-\frac{1}{2},g(x_0)+\frac{1}{2})]^+$ is a neighborhood of $g$ in $C_{ph}(X)$ which contains at most one member of the sequence $(g_n)_{n\in \mathbb{N}}$. Hence $X$ is countable.
\end{proof}

\section{\bf Completeness properties of $C_h(X)$ and $C_{ph}(X)$}
In this section, we study various kinds of completeness of $C_h(X)$ and $C_{ph}(X)$. In particular, we look at the complete metrizability of the spaces $C_h(X)$ and $C_{ph}(X)$ in a wider setting, more precisely, in relation to several other completeness properties. We first find when these spaces are pseudocomplete and Baire.

A quasi-regular space $X$ is called pseudocomplete if it has a sequence of $\pi$-bases $\{ \mathcal{B}_n : n \in \mathbb{N} \}$ such that whenever $B_n \in \mathcal{B}_n$ for each $n$ and $\overline{B_{n+1}} \subseteq B_n$, then $\cap \{ B_n : n \in \mathbb{N} \} \neq \emptyset$ (\cite{JO}). In \cite{aarts_lutzer}, it has been shown that a pseudocomplete space is a Baire space.

The following results help us to find a necessary condition for $C_h(X)$ and $C_{ph}(X)$ to be Baire spaces.

\begin{proposition}\label{open notdense finite openset}
A nonempty basic open set $[U_1, r_1]^-\cap\ldots\cap[U_n, r_n]^-$ in $C_h(X)$ is dense in $C_h(X)$ if and
only if $U_i$ is an infinite subset of $X$ for each $i\in\{1,\ldots,n\}$.
\end{proposition}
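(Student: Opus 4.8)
The plan is to prove both directions by working with a typical basic open set $W = [U_1, r_1]^- \cap \cdots \cap [U_n, r_n]^-$, which we assume is nonempty so that there is some $f_0 \in C(X)$ with $f_0^{-1}(r_i) \cap U_i \neq \emptyset$ for each $i$. The key observation driving everything is that membership in $[U,r]^-$ only requires a function to take the value $r$ \emph{somewhere} on the open set $U$, so we have a great deal of freedom to perturb functions as long as we can arrange one preimage point in each $U_i$; this freedom is exactly what is obstructed when some $U_i$ is finite and consists of isolated points that the target function avoids.

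For the ``only if'' direction (contrapositive), suppose some $U_i$ is finite, say $U_i = \{x_1, \ldots, x_k\}$. Being open, each $x_j$ is isolated in $X$. I would exhibit a nonempty open set disjoint from $W$: pick the basic open set $V = [\{x_1\}, r_i + 1]^+ \cap \cdots \cap [\{x_k\}, r_i+1]^+$ in $C_p(X)$ — wait, that is a $p$-open set, not an $h$-open set. Instead, since each $x_j$ is isolated, I can use the $h$-open set $V = [\{x_1\}, r_i+1]^- \cap \cdots \cap [\{x_k\}, r_i+1]^-$: this is nonempty (a function assigning $r_i+1$ to each $x_j$ and, say, $0$ elsewhere, extended continuously using isolation, lies in it), and any $g \in V$ satisfies $g(x_j) = r_i+1 \neq r_i$ for every $j$, so $g^{-1}(r_i) \cap U_i = \emptyset$, whence $g \notin W$. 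Thus $W$ is not dense.

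For the ``if'' direction, assume every $U_i$ is infinite; I must show $W$ is dense, i.e. $W$ meets every nonempty basic open set $W' = [O_1, s_1]^- \cap \cdots \cap [O_m, s_m]^-$ with $W' \neq \emptyset$, witnessed by some $g_0 \in C(X)$. The strategy is to modify $g_0$ on a small set to force it into $W$ without leaving $W'$. Since each $O_j$ is open and $g_0^{-1}(s_j) \cap O_j \neq \emptyset$, fix a point $z_j$ in that intersection. Now, for each $i$, because $U_i$ is infinite in a Tychonoff space, I can choose a point $p_i \in U_i$ together with a neighborhood that is disjoint (or can be separated via Urysohn) from the finitely many chosen points $z_1, \ldots, z_m$ and from the previously chosen $p_1, \ldots, p_{i-1}$ — here I need the $p_i$'s to be distinct from each other and from the $z_j$'s, which is possible precisely because each $U_i$ is infinite (a finite $U_i$ might be entirely contained in $\{z_1,\ldots,z_m\}$ after the other choices, which is where the argument would break). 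Then, using the Tychonoff property, build a continuous function that agrees with $g_0$ near each $z_j$, takes the value $r_i$ at $p_i$, and interpolates; more carefully, take $h = g_0 + \sum_i (r_i - g_0(p_i)) \varphi_i$ where $\varphi_i$ is a Urysohn function that is $1$ at $p_i$ and $0$ at all the $z_j$'s and at the other $p_\ell$'s. Then $h(z_j) = g_0(z_j) = s_j$ so $h \in W'$, and $h(p_i) = r_i$ with $p_i \in U_i$ so $h \in W$, giving $h \in W \cap W' \neq \emptyset$.

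The main obstacle is the bookkeeping in the ``if'' direction: one must choose the witnessing points $p_i \in U_i$ to be mutually distinct and distinct from all the $z_j$'s, and then separate them by Urysohn functions with pairwise disjoint-enough supports so that the perturbations do not interfere with one another or with the values $g_0(z_j)$. The infinitude of each $U_i$ is used exactly to guarantee such a choice is available (at each stage only finitely many points are forbidden), so this is the crux where the hypothesis enters; the rest is a routine application of complete regularity.
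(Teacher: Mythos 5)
Your proposal is correct and follows essentially the same route as the paper: the ``only if'' direction uses the same observation that a finite open $U_i$ consists of isolated points and can be blocked by the open set $[\{x_1\},r]^-\cap\ldots\cap[\{x_k\},r]^-$ for a value $r\neq r_i$, and the ``if'' direction makes the same choice of distinct witness points followed by interpolation on a finite set (the paper cites the Tychonoff property directly where you build the function explicitly as a sum of Urysohn bumps, which is just an unpacking of the same step).
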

\begin{proof}
Suppose that $[U_1, r_1]^-\cap\ldots\cap[U_n, r_n]^-$ is a nonempty basic open set in $C_h(X)$ such that $U_i$ is an infinite subset of $X$ for each $1\leq i\leq n$. Let $G=[V_1, t_1]^-\cap\ldots\cap[V_m, t_m]^-$ be any nonempty basic open set in $C_h(X)$. As $G\neq\emptyset$, there exists $f\in G$ such that $f(x_i)=t_i$, for some $x_i\in V_i$ and $1\leq i\leq m$. Since $U_i$ is an infinite subset of $X$, we can choose $y_i\in U_i$ for each $1\leq i\leq n$ such that $y_i\neq y_j$ for $i\neq j$ and $\{y_1, \ldots, y_n\}\cap \{x_1,\ldots,x_m\}=\emptyset$. Let $S=\{x_1, \ldots ,x_m, y_1,\ldots ,y_n\}$. Since $S$ is compact and $X$ is Tychonoff, there exists $h\in C(X)$ such that $h(x_i)=f(x_i)$ for each $i\in\{1,\ldots,m\}$ and $h(y_j)=r_j$ for each $j\in \{1,\ldots,n\}$. Thus $h\in [U_1, r_1]^-\cap\ldots\cap[U_n, r_n]^-\cap G$. Hence $[U_1, r_1]^-\cap\ldots\cap[U_n, r_n]^-$ is dense in $C_h(X)$.

Conversely, let $H=[U_1, r_1]^-\cap\ldots\cap[U_n, r_n]^-$ be a nonempty basic dense open set in $C_h(X)$. Suppose that for some $i\in \{1,\ldots,n\}$, $U_i$ is finite. Then $U_i\subseteq X^0$. Let $U_i=\{x_1,\ldots, x_m\}$. Take $r\in \mathbb{R}\setminus\{r_1,\ldots,r_n\}$ and $G=[x_1, r]^-\cap\ldots\cap[x_m, r]^-$. Then $G$ is a nonempty open subset of $C_h(X)$ such that $H\cap G=\emptyset$. So we arrive at a contradiction.
\end{proof}

\begin{corollary}\label{open notdense finite openset_coro} If $X$ is a space without isolated points, then every nonempty open set in $C_h(X)$ is dense in $C_h(X)$.\end{corollary}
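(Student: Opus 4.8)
The plan is to deduce this immediately from Proposition \ref{open notdense finite openset}. The only new ingredient needed is the elementary observation that, in a Tychonoff space $X$ without isolated points, every nonempty open subset $U$ of $X$ is infinite. Indeed, if such a $U$ were finite, say $U=\{x_1,\dots,x_m\}$, then since $X$ is $T_1$ each singleton $\{x_i\}=U\cap\bigl(X\setminus\{x_j: j\neq i\}\bigr)$ would be open, making each $x_i$ an isolated point of $X$, contrary to $X^0=\emptyset$.

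With that in hand, I would argue as follows. Let $W$ be a nonempty open set in $C_h(X)$. Then $W$ contains a nonempty basic open set $B=[U_1,r_1]^-\cap\cdots\cap[U_n,r_n]^-$. Each $U_i$ must be nonempty, for otherwise $[U_i,r_i]^-=\emptyset$ and hence $B=\emptyset$; moreover each $U_i$ is open in $X$. By the observation above, every $U_i$ is therefore an infinite subset of $X$, so Proposition \ref{open notdense finite openset} applies and $B$ is dense in $C_h(X)$. Since $B\subseteq W$, the set $W$ is dense in $C_h(X)$ as well.

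I do not anticipate any real obstacle here: the statement is a genuine corollary, and the argument is essentially a one-line reduction to the preceding proposition once one records that nonempty open subsets of a $T_1$ space without isolated points are infinite. The only point worth stating carefully is that a nonempty basic open set in $C_h(X)$ must have all of its defining open sets $U_i$ nonempty, which is what lets us invoke the proposition.
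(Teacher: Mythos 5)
Your proof is correct and is essentially the argument the paper intends (the corollary is stated without proof as an immediate consequence of Proposition \ref{open notdense finite openset}): in a $T_1$ space without isolated points every nonempty open set is infinite, so every nonempty basic open set of $C_h(X)$ is dense by the proposition, and hence so is every nonempty open set. No issues.
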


\begin{proposition}\label{open notdense finite openset for Cph}
A nonempty open set of the form $[U_1, r_1]^-\cap\ldots\cap[U_n, r_n]^-$ in $C_{ph}(X)$ is dense in $C_{ph}(X)$ if and
only if if $U_i$ is an infinite subset of $X$ for each $i\in\{1,\ldots,n\}$.
\end{proposition}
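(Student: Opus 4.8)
The plan is to follow the template of Proposition~\ref{open notdense finite openset} almost verbatim, adjusting only for the fact that a basic open set in $C_{ph}(X)$ now carries extra constraints of $[x,V]^+$ type. So I would take a typical nonempty basic open set of $C_{ph}(X)$ in the form
$$G=[y_1,V_1]^+\cap\ldots\cap[y_k,V_k]^+\cap[W_1,t_1]^-\cap\ldots\cap[W_m,t_m]^-,$$
fix $f\in G$ (possible since $G\neq\emptyset$), record that $f(y_j)\in V_j$ for every $j$, and for each $l$ choose some $x_l\in W_l$ with $f(x_l)=t_l$.

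For the forward implication, assuming each $U_i$ is infinite, I would next pick points $z_i\in U_i$, $1\le i\le n$, that are pairwise distinct and avoid the finite set $\{y_1,\ldots,y_k,x_1,\ldots,x_m\}$; this is possible because at each stage only finitely many points must be avoided from an infinite set. Then $S=\{y_1,\ldots,y_k,x_1,\ldots,x_m,z_1,\ldots,z_n\}$ is finite, hence compact, so since $X$ is Tychonoff there is $h\in C(X)$ with $h(y_j)=f(y_j)$, $h(x_l)=t_l$, and $h(z_i)=r_i$ for all indices. Such an $h$ lies in $[U_1,r_1]^-\cap\ldots\cap[U_n,r_n]^-\cap G$: the $[y_j,V_j]^+$ and $[W_l,t_l]^-$ requirements hold because $h$ agrees with $f$ at the relevant points, while $h(z_i)=r_i$ with $z_i\in U_i$ gives membership in each $[U_i,r_i]^-$. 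Hence $[U_1,r_1]^-\cap\ldots\cap[U_n,r_n]^-$ meets every nonempty basic open set of $C_{ph}(X)$, i.e.\ it is dense.

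For the converse I would argue contrapositively and reuse the construction from Proposition~\ref{open notdense finite openset}: if some $U_i$ is finite, then $U_i\subseteq X^0$, say $U_i=\{x_1,\ldots,x_p\}$; choosing $r\in\mathbb{R}\setminus\{r_1,\ldots,r_n\}$, the set $G=[x_1,r]^-\cap\ldots\cap[x_p,r]^-$ is a nonempty open subset of $C_h(X)$ (it contains the function equal to $r$ on $U_i$ and $0$ elsewhere), hence a nonempty open subset of $C_{ph}(X)$ because $C_h(X)\le C_{ph}(X)$, and it is disjoint from $[U_1,r_1]^-\cap\ldots\cap[U_n,r_n]^-$ by exactly the earlier argument (any common element would satisfy both $g(x_j)=r$ for all $j$ and $g(x_j)=r_i$ for some $j$).

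The only step that needs genuine care rather than being routine is checking that the value prescriptions defining $h$ are consistent whenever two points of $S$ coincide: any coincidence among the $y_j$'s and $x_l$'s forces the assigned values to agree precisely because $f\in G$, and the $z_i$'s were chosen to collide with nothing. Beyond this bookkeeping, everything transfers directly from the $C_h(X)$ argument, and the inclusion $C_h(X)\le C_{ph}(X)$ carries the converse for free.
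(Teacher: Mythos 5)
Your proof is correct and follows essentially the same route as the paper: the forward direction is the same finite-set interpolation argument using the Tychonoff property, with the $[y,V]^+$ constraints handled by making $h$ agree with $f$ at the relevant points. For the converse, the paper simply observes that density in the finer topology $C_{ph}(X)$ implies density in $C_h(X)$ and invokes Proposition~\ref{open notdense finite openset}, whereas you re-run that proposition's construction explicitly; the two are interchangeable.
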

\begin{proof}
 Suppose that $[U_1, r_1]^-\cap\ldots\cap[U_n, r_n]^-$ is a nonempty open set in $C_{ph}(X)$ such that $U_i$ is an infinite subset of $X$, for each $1\leq i\leq n$. Let $G=[z_1, H_1]^+\cap\ldots\cap[z_p, H_p]^+\cap[V_1, t_1]^-\cap\ldots\cap[V_m, t_m]^-$ be any nonempty basic open set in $C_{ph}(X)$. As $G\neq\emptyset$, there exists $f\in G$ such that $f(z_j)\in H_j$ for $1\leq j\leq p$ and $f(x_i)=t_i$, for some $x_i\in V_i$ and $1\leq i\leq m$. Since $U_i$ is infinite, we can choose $y_j\in U_j$ for $1\leq j\leq n$ such that for $i\neq j$, $y_i\neq y_j$ and $\{y_1, \ldots, y_n\}\cap \{x_1,\ldots,x_m,z_1,\ldots,z_p\}=\emptyset$. Let $S=\{x_1, \ldots ,x_m, y_1, \ldots ,y_n,z_1,\ldots,z_p\}$. Since $S$ is finite and $X$ is Tychonoff, there exists $h\in C(X)$ such that $h(x_i)=f(x_i)$ for each $i\in\{1,\ldots,m\}$, $h(z_k)=f(z_k)$ for each $k\in\{1,\ldots,p\}$ and $h(y_j)=r_j$ for each $j\in \{1,\ldots,n\}$. Thus $h\in [U_1, r_1]^-\cap\ldots\cap[U_n, r_n]^-\cap G$. Hence $[U_1, r_1]^-\cap\ldots\cap[U_n, r_n]^-$ is a dense open set in $C_{ph}(X)$.

Conversely, let $H=[U_1, r_1]^-\cap\ldots\cap[U_n, r_n]^-$ be a nonempty dense open set in $C_{ph}(X)$. This implies that $H$ is also a nonempty dense open set in $C_h(X)$. Then Proposition \ref{open notdense finite openset} implies that for each $1\leq i\leq n$, $U_i$ is an infinite subset of $X$.
\end{proof}

\begin{theorem}\label{firstcountablty nece Baire}
Suppose that $X$ has a non-isolated point $x_0$ such that $x_0$ is contained in a compact set having countable character.
Then $C_h(X)$ and $C_{ph}(X)$ are of first category, and hence are not Baire spaces.
\end{theorem}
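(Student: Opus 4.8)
The plan is to produce, uniformly for $\tau\in\{h,ph\}$, an explicit countable cover of $C(X)$ by sets that are closed and nowhere dense in $C_\tau(X)$; since $C(X)\neq\emptyset$, this exhibits $C_\tau(X)$ as a space of first category in itself, and a nonempty first-category space cannot be a Baire space.

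Fix a compact set $K$ with $x_0\in K$ having countable character, witnessed by open sets $\{W_n:n\in\mathbb{N}\}$ with $K\subseteq W_n$ for all $n$ and with the property that every open $W\supseteq K$ contains some $W_n$. Each $W_n$ is an open neighborhood of $x_0$; since $X$ is $T_1$ and $x_0$ is non-isolated, each $W_n$ is infinite. For $m,n\in\mathbb{N}$ set
$$E_{m,n}=\{f\in C(X):|f(x)|\le m\text{ for all }x\in W_n\}.$$
To see that $C(X)=\bigcup_{m,n\in\mathbb{N}}E_{m,n}$: for $f\in C(X)$ the set $f(K)$ is bounded, so $K\subseteq f^{-1}((-m,m))$ for some $m\in\mathbb{N}$, hence $W_n\subseteq f^{-1}((-m,m))$ for some $n$ by countable character, and then $f\in E_{m,n}$.

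Next I would observe that each $E_{m,n}$ is closed in $C_h(X)$, hence also in the finer space $C_{ph}(X)$: if $|f(x_*)|>m$ for some $x_*\in W_n$, then $f\in[W_n,f(x_*)]^-$, and every $g$ in this subbasic open set attains on $W_n$ a value of absolute value $|f(x_*)|>m$, so $[W_n,f(x_*)]^-$ is disjoint from $E_{m,n}$; thus $C(X)\setminus E_{m,n}$ is open. Then I would check that $E_{m,n}$ has empty interior in $C_h(X)$ and in $C_{ph}(X)$. Given any nonempty basic open set $G=[z_1,H_1]^+\cap\cdots\cap[z_p,H_p]^+\cap[V_1,t_1]^-\cap\cdots\cap[V_k,t_k]^-$ of $C_{ph}(X)$ (the case of $C_h(X)$ being $p=0$), pick $f\in G$, so $f(z_j)\in H_j$ and $f(x_i)=t_i$ for some $x_i\in V_i$; since $W_n$ is infinite, choose $y\in W_n$ different from every $z_j$ and every $x_i$, and use Tychonoffness of $X$ to pick $g\in C(X)$ with $g(z_j)=f(z_j)$, $g(x_i)=t_i$ and $g(y)=m+1$. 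Then $g\in G\setminus E_{m,n}$, so $G\not\subseteq E_{m,n}$.

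Putting these together, $C_h(X)$ and $C_{ph}(X)$ are each the union of the countably many closed nowhere dense sets $E_{m,n}$, hence are of first category, hence not Baire. The proof is not technically deep; the one computational point worth spelling out is why the interpolation defining $g$ is possible, namely that the finitely many prescribed points $z_1,\dots,z_p,x_1,\dots,x_k,y$ carry consistent values because $y$ is new and all other values are restrictions of the single function $f$. The genuine content, given the unfamiliar topology of $C_h(X)$, is rather the right choice of the $E_{m,n}$: the set of functions bounded by $m$ on a fixed $W_n$ is simultaneously closed (owing to the shape of the subbasic sets $[U,r]^-$) and nowhere dense (owing to $W_n$ being infinite), and this is the step I expect to be the main obstacle to locate.
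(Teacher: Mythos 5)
Your proof is correct and takes essentially the same route as the paper: both arguments cover $C(X)$ by countably many sets that are closed and nowhere dense in $C_h(X)$ (hence in the finer $C_{ph}(X)$), indexed by the countable character sequence of the compact set, with the covering coming from boundedness of $f$ on the compact set together with countable character, and the nowhere density coming from the infiniteness of each neighborhood $W_n$ of the non-isolated point. The only difference is cosmetic: the paper uses the complements $([W_n,q]^-)^c$ with $q\in\mathbb{Q}$ (functions omitting the value $q$ on $W_n$), which lets it quote Proposition \ref{open notdense finite openset} directly, whereas your sets $E_{m,n}$ are cut out by a boundedness condition and you verify closedness and empty interior by hand with the same finite-interpolation argument.
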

\begin{proof}
Let $A$ be a compact set in $X$ such that $x_0\in A$ and $A$ has a countable character. Let $\mathcal{B}$ be a countable family of open sets in $X$ such that every member of $\mathcal{B}$ contains $A$ and if $U$ is an open set in $X$ containing $A$, then there exists a $B\in \mathcal{B}$ such that $B\subseteq U$. Since $x_0$ is a non-isolated point, for every $U\in \mathcal{B}$, $U$ is an infinite subset of $X$. So by Proposition \ref{open notdense finite openset} for every $U\in \mathcal{B}$ and $r\in \mathbb{R}$, $[U,r]^-$ is an open dense subset of $C_h(X)$. Thus $\mathcal{D}=\{([U,q]^-)^c:U\in\mathcal{B}, q\in\mathbb{Q}\}$ is a countable collection of closed and nowhere dense subsets of $C_h(X)$. In order to prove that $C_h(X)$ is of first category, we show that $C(X)=\cup \mathcal{D}$.

Suppose that there exists $f\in C(X)$ such that $f\notin \cup \mathcal{D}$. This implies that $f\in [U,q]^-$ for every $U\in \mathcal{B}$ and $q\in\mathbb{Q}$. Thus $\mathbb{Q}\subseteq f(U)$ for all $U\in\mathcal{B}$. Since $A$ is compact in $X$, there exists an open interval $(a,b)$ in $\mathbb{R}$ such that $f(A)\subseteq (a,b)$. Now $A$ has countable character, therefore there exists $U\in\mathcal{B}$ such that $A\subseteq U\subseteq f^{-1}(a,b)$. Consequently, $\mathbb{Q}\subseteq f(U)\subseteq (a,b)$, which is impossible. Hence $C_h(X)$ is not a Baire space.

By using Proposition \ref{open notdense finite openset for Cph} in place Proposition \ref{open notdense finite openset}, we can prove in a similar manner that $C_{ph}(X)$ is of first category.
\end{proof}

\begin{corollary}\label{corollaryfirstcountablty nece Baire}
If $X$ is a space of pointwise countable type, and either $C_h(X)$ or $C_{ph}(X)$ is a Baire space, then $X$ is discrete.
\end{corollary}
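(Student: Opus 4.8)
The plan is to argue by contraposition: assume $X$ is not discrete and show that neither $C_h(X)$ nor $C_{ph}(X)$ can be a Baire space. If $X$ is not discrete, then $X$ has a point $x_0$ that is not isolated. Since $X$ is of pointwise countable type, the compact set $\{x_0\}$ is contained in a compact set $A\subseteq X$ having countable character; in particular $x_0\in A$. This is exactly the hypothesis of Theorem \ref{firstcountablty nece Baire} applied to the non-isolated point $x_0$.

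Invoking Theorem \ref{firstcountablty nece Baire}, we conclude that both $C_h(X)$ and $C_{ph}(X)$ are of first category in themselves, and hence neither is a Baire space. This contradicts the assumption that $C_h(X)$ or $C_{ph}(X)$ is Baire. Therefore $X$ admits no non-isolated point, i.e. $X$ is discrete.

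I expect essentially no serious obstacle here, since the corollary is a direct specialization of Theorem \ref{firstcountablty nece Baire}; the only point requiring care is to note that ``pointwise countable type'' is precisely what supplies, for the non-isolated point $x_0$, a compact set $A\ni x_0$ of countable character (with the character taken in $X$ itself, so no appeal to Lemma \ref{pointwise_lemma} is needed), so that the hypothesis of Theorem \ref{firstcountablty nece Baire} is met verbatim.
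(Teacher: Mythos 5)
Your proof is correct and is exactly the intended argument: the paper states this as an immediate corollary of Theorem \ref{firstcountablty nece Baire}, obtained by contraposition just as you describe, with pointwise countable type supplying the compact set of countable character containing the non-isolated point. Nothing further is needed.
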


 In order to show that the converse of Corollary \ref{corollaryfirstcountablty nece Baire} is also true, we need the following results.

\begin{proposition}\label{comparison of topologies}
For any space $X$, the following are equivalent.
\begin{enumerate}
  \item [$(a)$] The space $C_p(X)< C_h(X)$.
  \item [$(b)$] The space $C_{ph}(X)= C_h(X)$.
  \item [$(c)$] $X$ is a discrete space.
\end{enumerate}
\end{proposition}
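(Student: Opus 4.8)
The plan is to prove the cyclic chain of implications $(c)\Rightarrow(a)\Rightarrow(b)\Rightarrow(c)$, since each individual step is short once the right sets are produced. The equivalences hinge on understanding when the subbasic sets $[x,V]^+$ of the point-open topology are already open in $C_h(X)$, and conversely when the subbasic sets $[U,r]^-$ of the open-point topology fail to be open in $C_p(X)$.

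For $(c)\Rightarrow(a)$, assume $X$ is discrete. First I would check $C_p(X)\leq C_h(X)$: given a subbasic set $[x,V]^+$ and $f\in[x,V]^+$, since $\{x\}$ is open in $X$ we have $f\in[\{x\},f(x)]^-\subseteq[x,V]^+$, so $[x,V]^+$ is open in $C_h(X)$. The inclusion is strict because, for instance, on a discrete $X$ with at least two points the set $[\{x\},0]^-$ is not open in $C_p(X)$ (a pointwise neighborhood of a function hitting $0$ at $x$ controls only finitely many coordinate values and cannot force the value $0$ to actually be attained on all of $\{x\}$)—or more cleanly, if $X$ is infinite discrete then $C_p(X)$ is metrizable only when $X$ is countable, while $C_h(X)$ never is when $X$ is infinite, by Theorem \ref{metrizability of C_h(X)}; and if $X$ is finite discrete one exhibits the separating open set directly. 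I would present the direct separating-set argument to cover all cases uniformly.

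For $(a)\Rightarrow(b)$: the bi-point-open topology $ph$ is by definition the join of $p$ and $h$, so $C_{ph}(X)=C_p(X)\vee C_h(X)$; if $C_p(X)\leq C_h(X)$ then this join is just $C_h(X)$, giving $C_{ph}(X)=C_h(X)$. (The strictness in $(a)$ is not needed here—only the inequality $\leq$—so this implication is essentially immediate from the definition of the join.)

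For $(b)\Rightarrow(c)$, the main obstacle lies: I must show that if $X$ has a non-isolated point then $C_{ph}(X)\neq C_h(X)$, i.e. some $[x,V]^+$ is not $h$-open. Suppose $x_0$ is non-isolated and consider $U=[x_0,(-1,1)]^+$, which contains $0_X$. I would argue that no basic $h$-neighborhood $B=[U_1,r_1]^-\cap\cdots\cap[U_n,r_n]^-$ of $0_X$ is contained in $U$: each $U_i$ is nonempty open, hence (since $x_0$ is non-isolated, one can arrange the relevant $U_i$ to be infinite, or handle finite ones separately) we can build a continuous $g$ with $g^{-1}(r_i)\cap U_i\neq\emptyset$ for all $i$ but $g(x_0)$ arbitrarily large—for example, pick witnesses $y_i\in U_i$ with $y_i\neq x_0$, use Tychonoff-ness on the finite set $\{x_0,y_1,\dots,y_n\}$ to get $g$ with $g(y_i)=r_i$ and $g(x_0)=2$, so $g\in B\setminus U$. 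The one delicate point is the case where some $U_i=\{x_0\}$ forces $r_i$ to lie in a constrained range, but $x_0$ non-isolated makes every open $U_i$ infinite, so this cannot happen and the witnesses $y_i$ distinct from $x_0$ always exist. This contradicts $C_{ph}(X)=C_h(X)$, so $X$ must be discrete.
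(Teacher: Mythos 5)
Your proposal follows essentially the same route as the paper's proof: the same implication cycle (the paper orders it $(a)\Rightarrow(b)\Rightarrow(c)\Rightarrow(a)$) with the same three ingredients --- the inclusion $[\{x\},f(x)]^-\subseteq[x,V]^+$ for discrete $X$, the join argument for $(a)\Rightarrow(b)$, and a function $g$ lying in a given basic $h$-neighborhood of $0_X$ but escaping $[x_0,(-1,1)]^+$ for $(b)\Rightarrow(c)$; your explicit observation that $[\{x\},0]^-$ witnesses strictness in $(a)$ is a point the paper leaves implicit, and is a worthwhile addition. One justification needs repair: it is false that ``$x_0$ non-isolated makes every open $U_i$ infinite'' (in $\{0\}\cup\{1/n:n\in\mathbb{N}\}$ the point $0$ is non-isolated yet the singletons $\{1/n\}$ are finite open sets); what you actually need, and what is true, is only that $U_i\setminus\{x_0\}\neq\emptyset$ for each $i$ --- trivially if $x_0\notin U_i$, and if $x_0\in U_i$ then $U_i\neq\{x_0\}$ because $\{x_0\}$ is not open. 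Finally, the potential clash of prescribing $g(y_i)=r_i$ when $y_i=y_j$ but $r_i\neq r_j$ never arises, since $0_X\in[U_i,r_i]^-$ forces every $r_i=0$; this is worth one line in the final write-up.
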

\begin{proof}
\noindent $(a)\Rightarrow (b)$. It is immediate.

\noindent $(b)\Rightarrow (c)$. Suppose that $X$ is not discrete. Then $X$ has some non-isolated point $x_0$. Let $0_X$ be the constant zero-function on $X$. Now $[x_0,(-1,1)]^+$ is an open neighborhood of $0_X$  in $C_{ph}(X)$. To show that $[x_0,(-1,1)]^+$ is not a neighborhood of $0_X$ in $C_h(X)$, let $$B=[U_1,0]^-\cap\ldots\cap[U_n,0]^-$$ be any basic neighborhood of $0_X$ in $C_h(X)$. Since $x_0$ is a non-isolated point, there exists $x_i\in U_i\setminus\{x_0\}$ for each $1\leq i\leq n$.  As $X$ is a Tychonoff space, there exists a continuous function $g:X\rightarrow [0,1]$ such that $g(x_0)=1$ and $g(x)=0$ for all $x\in \{x_1,\ldots,x_n\}$. Then $g\in B$ but $g\notin [x_0,(-1,1)]^+$. So $[x_0,(-1,1)]^+$ cannot be open in $C_h(X)$.

\noindent $(c)\Rightarrow (a)$. Suppose that $X$ is discrete. Take any subbasic open set $H$ in $C_p(X)$, where $H=[x,V]^+=\{f\in C(X):f(x)\in V\}$ for some $x\in X$ and some open set $V$ in $\mathbb{R}$. Since $\{x\}$ is open in $X$, for any $f\in [x,V]^+$, we have $f\in [x,f(x)]^-\subseteq [x,V]^+$. Hence $[x,V]^+$ is open in $C_h(X)$.
\end{proof}

\begin{lemma}\label{XdiscreteChXandRXHomeomorphism}
If $X$ is a discrete space, then $C_h(X)$ is homeomorphic to the product of $|X|$ many copies of the space $\mathbb{R}$ with discrete topology.
\end{lemma}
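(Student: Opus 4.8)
The plan is to exhibit an explicit bijection and check that it is a homeomorphism. Let $X$ be discrete. Since every function $X \to \mathbb{R}$ is continuous, the underlying set of $C(X)$ is just $\mathbb{R}^X$, so as sets $C_h(X)$ and $\prod_{x \in X} \mathbb{R}_d$ (where $\mathbb{R}_d$ denotes $\mathbb{R}$ with the discrete topology) coincide; I would take the identity map $\Phi \colon C_h(X) \to \prod_{x \in X} \mathbb{R}_d$, $f \mapsto (f(x))_{x \in X}$, as the candidate homeomorphism. It remains only to show the two topologies agree.

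First I would show the product topology is finer. A subbasic open set of the product is $\pi_x^{-1}(\{r\}) = \{ f : f(x) = r \}$ for $x \in X$ and $r \in \mathbb{R}$; since $\{x\}$ is open in $X$, this set equals $[\{x\}, r]^-$, which is subbasic open in $C_h(X)$. Hence every product-subbasic set is $h$-open, so $C_h(X) \geq \prod_{x} \mathbb{R}_d$. Conversely, for the other inclusion, take a subbasic $h$-open set $[U, r]^-$ with $U \subseteq X$ open and $r \in \mathbb{R}$, and let $f \in [U,r]^-$, say $f(x_0) = r$ with $x_0 \in U$. Because $X$ is discrete, $\{x_0\} \subseteq U$ is open, so the basic product-open neighborhood $\pi_{x_0}^{-1}(\{r\}) = \{ g : g(x_0) = r \}$ of $f$ is contained in $[U,r]^-$. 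Thus every $h$-subbasic set is open in the product topology, giving the reverse comparison; the two topologies are equal.

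I do not expect any real obstacle here — this is essentially an unwinding of definitions, and indeed the argument is almost identical to the proof of $(c) \Rightarrow (a)$ in Proposition \ref{comparison of topologies}, except that now we work with the single-point subbasic sets $[\{x\}, r]^-$ directly rather than comparing to $C_p(X)$. The only mild point to be careful about is that a subbasic open set $[U, r]^-$ need not be a single product-subbasic set (it is a union of them as $x_0$ ranges over $U$), so one argues pointwise as above rather than by matching subbases directly.
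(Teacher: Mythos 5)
Your proof is correct and follows essentially the same route as the paper: both use the evaluation bijection $f \mapsto (f(x))_{x\in X}$, with the paper merely asserting continuity of the map and its inverse while you spell out the subbase comparison. One small wording slip: your opening sentence of the comparison says you will show ``the product topology is finer,'' but the argument that follows (product-subbasic sets are $h$-open) in fact shows the $h$-topology is finer, consistent with your stated conclusion $C_h(X)\geq\prod_x\mathbb{R}_d$.
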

\begin{proof}
Suppose that $X=\{x_i:i\in I\}$ and let $\mathbb{R}_d$ denote the set $\mathbb{R}$ with discrete topology. Define $\psi:C_h(X)\rightarrow \mathbb{R}_{d}^{|X|}$ by $\psi(f)=(f(x_i))_{i\in I}$ for each $f\in C(X)$, and define $\phi:\mathbb{R}_{d}^{|X|}\rightarrow C_h(X)$ by $\phi((r_i)_{i\in I})=f$, where $f(x_i)=r_i$ for each $i\in I$ (note that $f\in C(X)$ since $X$ is a discrete space). It is easy to see that $\phi\circ \psi$ is the identity map on $C_h(X)$ and $\psi\circ \phi$ is the identity map on $\mathbb{R}_{d}^{|X|}$. So $\psi$ is a bijection. Since $X$ is a discrete space, it is easy to prove that the functions $\psi$ and $\phi$ are continuous.
\end{proof}

When $X$ is of pointwise countable type, the next result completely characterizes the situation when $C_{\tau}(X)$ ($\tau=h$, $ph$) is either pseudocomplete or a Baire space. In fact, in this case $C_{\tau}(X)$ ($\tau=h$, $ph$) is pseudocomplete or a Baire space if and only if $C_p(X)$ is so respectively.

\begin{theorem}\label{BaireandCechcompletepseudocomplete}
If $X$ is of pointwise countable type, then the following are equivalent.
\begin{enumerate}
\item[$(a)$] $C_h(X)$ is a Baire space.
\item[$(b)$] $C_{ph}(X)$ is a Baire space.
\item[$(c)$] $X$ is discrete.
\item[$(d)$] $C_h(X)$ is pseudocomplete.
\item[$(e)$] $C_{ph}(X)$ is pseudocomplete.
\item[$(f)$] $C_p(X)$ is pseudocomplete.
\item[$(g)$] $C_p(X)$ is a Baire space.
\end{enumerate}
\end{theorem}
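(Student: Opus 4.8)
The plan is to prove the three cycles $(c)\Rightarrow(d)\Rightarrow(a)\Rightarrow(c)$, $(c)\Rightarrow(e)\Rightarrow(b)\Rightarrow(c)$ and $(c)\Rightarrow(f)\Rightarrow(g)\Rightarrow(c)$; together these give all the equivalences. The implications $(a)\Rightarrow(c)$ and $(b)\Rightarrow(c)$ are immediate from Corollary \ref{corollaryfirstcountablty nece Baire}, while $(d)\Rightarrow(a)$, $(e)\Rightarrow(b)$ and $(f)\Rightarrow(g)$ are just the general fact that a pseudocomplete space is a Baire space (\cite{aarts_lutzer}). So the real content lies in the three implications out of $(c)$ and in $(g)\Rightarrow(c)$.

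For the implications out of $(c)$, suppose $X$ is discrete. Then by Lemma \ref{XdiscreteChXandRXHomeomorphism}, $C_h(X)$ is homeomorphic to $\mathbb{R}_{d}^{|X|}$, a product of copies of the discrete (hence completely metrizable, hence pseudocomplete) space $\mathbb{R}_{d}$; since pseudocompleteness is preserved by arbitrary products (Oxtoby, \cite{JO}), $C_h(X)$ is pseudocomplete, giving $(c)\Rightarrow(d)$. Moreover, by Proposition \ref{comparison of topologies} we have $C_{ph}(X)=C_h(X)$ when $X$ is discrete, so $C_{ph}(X)$ is pseudocomplete as well, giving $(c)\Rightarrow(e)$. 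Finally, for a discrete $X$ the space $C_p(X)$ is the full product $\mathbb{R}^{|X|}$, which is pseudocomplete by the same product argument, giving $(c)\Rightarrow(f)$.

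The implication I expect to be the main obstacle is $(g)\Rightarrow(c)$, because here the open-point tools (Proposition \ref{open notdense finite openset}, Theorem \ref{firstcountablty nece Baire}) are unavailable and one must argue inside $C_p(X)$. I would prove the contrapositive. Assume $X$ is of pointwise countable type but not discrete, and pick a non-isolated point $x_0$; by pointwise countable type it lies in a compact set $A$ of countable character. If $A$ is infinite, set $K=A$. If $A$ is finite, a Hausdorff-separation argument shows that the traces on a small neighbourhood of $x_0$ of a countable character base of $A$ form a countable neighbourhood base at $x_0$, so $X$ is first countable at the non-isolated point $x_0$; then a nontrivial sequence converges to $x_0$, and its range together with $x_0$ is the desired infinite compact set $K$. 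Now put $F_m=\bigcap_{x\in K}[x,[-m,m]]^{+}$ for $m\in\mathbb{N}$. Each $F_m$ is closed in $C_p(X)$; each $F_m$ has empty interior, since given any nonempty basic open set one chooses $x_*\in K$ outside the finitely many points it constrains and, using complete regularity of $X$, produces a member $f$ of that set with $f(x_*)=m+1\notin[-m,m]$; and $\bigcup_{m}F_m=C(X)$ because every continuous function is bounded on the compact set $K$. Hence $C_p(X)$ is of first category and therefore not a Baire space, proving $(g)\Rightarrow(c)$.

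The remaining steps are the pseudocomplete-implies-Baire implications noted above, and one may additionally remark that the equivalences $(c)\Leftrightarrow(f)\Leftrightarrow(g)$ are essentially contained in the study of $C_p(X)$ in \cite{LM}.
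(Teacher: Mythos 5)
Your decomposition into the three cycles is exactly the paper's: $(a)\Rightarrow(c)$ and $(b)\Rightarrow(c)$ via Corollary \ref{corollaryfirstcountablty nece Baire}; the implications out of $(c)$ via Lemma \ref{XdiscreteChXandRXHomeomorphism}, Proposition \ref{comparison of topologies}, and Oxtoby's product theorem for pseudocompleteness; and $(d)\Rightarrow(a)$, $(e)\Rightarrow(b)$, $(f)\Rightarrow(g)$ from pseudocomplete $\Rightarrow$ Baire. The one place you diverge is $(g)\Rightarrow(c)$: the paper simply cites Corollary I.3.6 of \cite{arhan}, whereas you prove it directly, and your argument is correct. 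In detail: from pointwise countable type and a non-isolated point $x_0$ you extract an infinite compact set $K$ (either the compact set $A$ of countable character is already infinite, or, when $A$ is finite, the Hausdorff separation of $A$ turns its character base into a countable local base at $x_0$, and a nontrivial convergent sequence supplies $K$ --- note that in a $T_1$ space such a sequence must have infinite range); then the sets $F_m=\{f\in C(X): f(K)\subseteq[-m,m]\}$ are closed with empty interior in $C_p(X)$ (using that finite subsets of a Tychonoff space are $C$-embedded) and cover $C(X)$, so $C_p(X)$ is of first category. This is the standard proof that $C_p(X)$ Baire forces all compact subsets of $X$ to be finite, and it is essentially what lies behind the cited corollary; what your version buys is self-containedness and a pleasant symmetry with the paper's own Theorem \ref{firstcountablty nece Baire}, whose first-category argument for $C_h(X)$ and $C_{ph}(X)$ it mirrors on the $C_p$ side, at the cost of about a paragraph of extra work.
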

\begin{proof}
\noindent $(a)\Rightarrow (c)$ and $(b)\Rightarrow (c)$. These follow from Corollary \ref{corollaryfirstcountablty nece Baire}.

\noindent $(c)\Rightarrow (d)$ and $(c)\Rightarrow (e)$. These follow from Proposition \ref{comparison of topologies}, Lemma \ref{XdiscreteChXandRXHomeomorphism} and the fact that every completely metrizable space is pseudocomplete and arbitrary product of pseudocomplete spaces is pseudocomplete (see \cite{JO}).

\noindent $(d)\Rightarrow (a)$, $(e)\Rightarrow (b)$ and $(f)\Rightarrow (g)$. These are true because every pseudocomplete space is a Baire space (see \cite{JO}).

\noindent $(c)\Rightarrow (f)$. If $X$ is discrete, then $C_p(X)=\mathbb{R}^X$. But an arbitrary product of pseudocomplete spaces is pseudocomplete.

\noindent $(g)\Rightarrow (c)$. This follows from Corollary I.3.6 in \cite{arhan}.
\end{proof}

Now we study the complete metrizability of the spaces $C_h(X)$ and $C_{ph}(X)$ in relation to several other completeness properties. We first recall the definitions of various kinds of completeness. For the rest of this section all spaces are assumed to be Tychonoff.

A space $X$ is called $\check{C}$ech-complete if $X$ is a $G_\delta$-set in $\beta X$, where $\beta X$ is the Stone-$\check{C}$ech compactification of $X$. A space $X$ is called locally $\check{C}$ech-complete if every point $x \in X$ has a $\check{C}$ech-complete neighborhood. Clearly, every $\check{C}$ech-complete space is locally $\check{C}$ech-complete.

In order to deal with sieve-completeness, partition-completeness, one needs to recall the definitions of these concepts from \cite{m4}. The central idea of all these concepts is that of a complete sequence of subsets of $X$.

Let $\mathcal{F}$ and $\mathcal{U}$ be two collections of subsets of $X$.  Then $\mathcal{F}$ is said to be controlled by $\mathcal{U}$ if for each $U\in \mathcal{U}$, there exists some $F\in \mathcal{F}$ such that $F\subseteq U$. A sequence $(U_n) $ of subsets of $X$ is said to be complete if every filter base $\mathcal{F}$ on $X$  which is controlled by $(U_n)$ clusters at some $x\in X$. A sequence $(\mathcal{U}_n)$ of collections of subsets of $X$ is called complete if $(U_n)$ is a complete sequence of subsets of $X$ whenever $U_n\in\mathcal{U}_n$ for all $n$. It has been shown in Theorem 2.8 of \cite{frolic} that the following statements are equivalent for a Tychonoff space $X$: $(a)$  $X$ is a $G_{\delta}$-subset of any Hausdorff space in which it is densely embedded; $(b)$ $X$ has a complete sequence of open covers; and $(c)$ $X$ is $\check{C}$ech-complete. From this result, it easily follows that a Tychonoff space $X$ is $\check{C}$ech-complete if and only if $X$ is a $G_{\delta}$-subset of any Tychonoff space in which it is densely embedded.\\

For the definitions of sieve, sieve-completeness and partition-completeness, see \cite{by},  \cite{m4},  and \cite{m5}. The term ``sieve-complete" is due to Michael \cite{michael}, but the sieve-complete spaces were studied earlier under different names: as $\lambda_b$-spaces by Wicke in \cite{wi}, as spaces satisfying condition {\large{$\mathscr{K}$}} by Wicke and Worrel Jr. in \cite{ww} and as monotonically $\check{C}ech$-complete spaces by Chaber, $\check{\mbox{C}}$oban and Nagami in \cite{CNN}. Every space with a complete sequence of open covers is sieve-complete; the converse is generally false, but it is true in paracompact spaces, see Remark 3.9 in \cite{CNN} and Theorem 3.2 in \cite{michael}. So a $\check{C}$ech-complete space is sieve-complete and a paracompact sieve-complete space is $\check{C}$ech-complete. Also every sieve-complete space is partition-complete.\\

\begin{theorem}\label{complete_metrizability_of_Ch(X)}
For any space $X$, the following are equivalent.
\begin{enumerate}
\item[$(a)$] $C_h(X)$ is completely metrizable.
\item[$(b)$] $C_h(X)$ is $\check{C}$ech-complete.
\item[$(c)$] $C_{h}(X)$ is locally $\check{C}$ech-complete.
\item[$(d)$] $C_{h}(X)$ is sieve-complete.
\item[$(e)$] $C_{h}(X)$ is an open continuous image of a paracompact $\check{C}$ech-complete space.
\item[$(f)$] $C_{h}(X)$ is an open continuous image of a $\check{C}$ech-complete space.
\item[$(g)$] $C_{h}(X)$ is partition-complete.
\item[$(h)$] $C_{p}(X)$ is completely metrizable.
\item[$(i)$] $C_{p}(X)$ is $\check{C}$ech-complete.
\item[$(j)$] $X$ is a countable discrete space.
\end{enumerate}
\end{theorem}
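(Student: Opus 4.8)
The overall plan is to prove the cycle $(j)\Rightarrow(a)\Rightarrow(b)\Rightarrow(c)\Rightarrow(j)$, to attach the side chains $(b)\Rightarrow(d)\Rightarrow(g)\Rightarrow(j)$ and $(a)\Rightarrow(e)\Rightarrow(f)\Rightarrow(j)$, and to handle the $C_p$-clauses as $(j)\Rightarrow(h)\Rightarrow(i)\Rightarrow(j)$. The only implication in which an object actually has to be built is $(j)\Rightarrow(a)$: if $X$ is a countable discrete space then by Lemma~\ref{XdiscreteChXandRXHomeomorphism} the space $C_h(X)$ is homeomorphic to $\mathbb{R}_d^{|X|}$ with $|X|\le\aleph_0$, each factor $\mathbb{R}_d$ is discrete hence completely metrizable, and a countable product of completely metrizable spaces is completely metrizable. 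The implications $(a)\Rightarrow(b)$, $(b)\Rightarrow(c)$, $(b)\Rightarrow(d)$, $(d)\Rightarrow(g)$, $(a)\Rightarrow(e)$ and $(e)\Rightarrow(f)$ are all instances of the general facts recalled immediately before the theorem (a completely metrizable space is paracompact and $\check{C}$ech-complete, hence is its own open continuous image; a $\check{C}$ech-complete space is locally $\check{C}$ech-complete and sieve-complete; a sieve-complete space is partition-complete). On the $C_p$-side, $(j)\Rightarrow(h)$ because $C_p(X)$ is then a countable power of $\mathbb{R}$, hence Polish; $(h)\Rightarrow(i)$ is immediate; and $(i)\Rightarrow(j)$ is the known description of $\check{C}$ech-complete spaces $C_p(X)$ from~\cite{LM} (namely, $C_p(X)$ is $\check{C}$ech-complete exactly when $X$ is a countable discrete space).

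What remains, and what carries the content of the theorem, is the collapse $(c)\Rightarrow(j)$, $(f)\Rightarrow(j)$, $(g)\Rightarrow(j)$; these are enough, since $(b)\Rightarrow(c)$, $(d)\Rightarrow(g)$, $(e)\Rightarrow(f)$ and $(a)$ implies every one of $(b)$--$(g)$. I would split each of these into two steps. \emph{First step: $X$ is discrete.} Each of $(c)$, $(f)$, $(g)$ makes $C_h(X)$ a Baire space --- local $\check{C}$ech-completeness because Baireness is a local property and $\check{C}$ech-complete spaces are Baire; ``open continuous image of a $\check{C}$ech-complete space'' because a continuous open surjective image of a Baire space is Baire; partition-completeness because partition-complete spaces are Baire (\cite{m4}). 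Each of them also makes $C_h(X)$ a $q$-space: $\check{C}$ech-complete spaces are $q$-spaces, the $q$-space property is local and is preserved by continuous open surjections, and for sieve- and partition-complete spaces the $q$-space condition at a point is witnessed by a strand of a complete sieve (respectively, of a complete sequence of partitions), with the sharpest care needed in the partition-complete case. Now if $X$ had a non-isolated point $x_0$, then --- once it is established that this structure on $C_h(X)$ forces $x_0$ to lie in a compact subset of $X$ of countable character --- Theorem~\ref{firstcountablty nece Baire} would make $C_h(X)$ of first category, contradicting Baireness; hence $X$ is discrete.

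\emph{Second step: $X$ is countable.} With $X$ discrete, $C_h(X)\cong\mathbb{R}_d^{|X|}$ by Lemma~\ref{XdiscreteChXandRXHomeomorphism}. Suppose $|X|$ is uncountable and let $(V_n)$ witness the $q$-space property of $C_h(X)$ at the point $e$ all of whose coordinates are $0$. Each $V_n$ contains a basic box restricting only finitely many coordinates, so the set $S$ of all coordinates restricted by those boxes is countable; pick a coordinate $\alpha\notin S$ and, for each $n$, a point $f_n\in V_n$ equal to $n$ on coordinate $\alpha$ and to $0$ on every other coordinate. Then $f_n$ lies in the box inside $V_n$, yet $(f_n)$ has no cluster point, since such a point would have to take a cluster value of $\{n:n\in\mathbb{N}\}$ on coordinate $\alpha$, impossible in the discrete space $\mathbb{R}_d$. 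This contradicts the $q$-space property from the first step, so $|X|\le\aleph_0$ and $(j)$ holds.

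The step I expect to be the genuine obstacle is the bracketed claim in the first step: for an arbitrary Tychonoff $X$ with a non-isolated point $x_0$, showing that the completeness properties of $C_h(X)$ really do force the first-category conclusion of Theorem~\ref{firstcountablty nece Baire}, even though a priori $x_0$ need not lie in a compact set of countable character (Theorem~\ref{firstcountablty nece Baire} as stated gives exactly this only under that hypothesis). In other words, one must reflect the completeness of $C_h(X)$ back to a smallness statement about $X$ itself --- the analogue of the delicate point handled for $C_p(X)$ in~\cite{LM}. I expect the resolution to combine the Baireness of $C_h(X)$ with a non-clustering-sequence argument at $0_X$ in the spirit of the proof of $(m)\Rightarrow(e)$ in Theorem~\ref{metrizability of C_h(X)}; the extra leverage over that theorem --- the full strength of $\check{C}$ech-completeness-type hypotheses rather than mere $q$-ness of $C_h(X)$ --- should make it possible to dispense with the $G_\delta$-density assumption on $X^0$ used there.
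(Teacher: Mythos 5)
Your reduction of the theorem to the three implications $(c)\Rightarrow(j)$, $(f)\Rightarrow(j)$, $(g)\Rightarrow(j)$ is sound, and your treatment of the easy implications, of $(j)\Rightarrow(a)$, and of the $C_p$-clauses matches the paper. But the proposal does not actually prove those three implications: your ``first step'' ($X$ is discrete) rests on the bracketed claim that you yourself identify as the genuine obstacle, namely that the completeness hypotheses on $C_h(X)$ force a non-isolated point $x_0$ of $X$ to lie in a compact set of countable character \emph{in $X$}, so that Theorem~\ref{firstcountablty nece Baire} can be applied. Nothing in $(c)$, $(f)$ or $(g)$ yields such structure on $X$ itself --- these are hypotheses on the function space, while Theorem~\ref{firstcountablty nece Baire} runs in the opposite direction (from structure on $X$ to the category of $C_h(X)$) --- and you offer no argument for it. Some of the auxiliary claims in that step are also left unjustified and are not obviously true (for instance, that partition-completeness of $C_h(X)$ makes it a $q$-space). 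So the central collapse of the theorem is missing.

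The paper's route avoids this obstacle entirely: instead of ``Baire $+$ $q$-space'' plus Theorem~\ref{firstcountablty nece Baire}, it funnels everything through \emph{pointwise countable type of the function space}. Concretely, $(c)\Rightarrow(f)$ is Engelking 3.12.19(d); an open continuous image of a $\check{C}$ech-complete space is of pointwise countable type, which handles $(f)$; and a partition-complete space contains a dense $\check{C}$ech-complete (hence pointwise-countable-type) subspace, which handles $(g)$ via Proposition~\ref{dense_subspace_of_pointwise_countable_type}. Feeding this into Theorem~\ref{metrizability of C_h(X)} (clauses $(h)$, $(i)$ $\Rightarrow$ $(e)$) gives ``$X$ is countable and discrete'' in one stroke; the non-clustering-sequence argument you anticipated is exactly the $(m)\Rightarrow(e)$ step there, but it is run from the $q$-space property that pointwise countable type guarantees, not from Baireness, and it simultaneously yields discreteness and countability, making your ``second step'' redundant. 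If you want to repair your outline, the missing ingredient is precisely this chain: each of $(c)$, $(f)$, $(g)$ implies $C_h(X)$ has (a dense subspace of) pointwise countable type, hence is a $q$-space, hence $X$ is a countable discrete space; one must also note (as the paper does in $(a)\Rightarrow(j)$ via Theorem 3.7 of \cite{AMK}) how the $G_\delta$-density hypothesis of Theorem~\ref{metrizability of C_h(X)} is secured.
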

\begin{proof}
\noindent $(a)\Rightarrow(b)$. It follows from the fact that every completely metrizable space is $\check{C}$ech-complete.

\noindent $(b) \Rightarrow(c)$, $(b) \Rightarrow(d)$ and $(d) \Rightarrow (g)$ follows from the previous discussion. Also implications $(a) \Rightarrow(e)\Rightarrow(f)$ are immediate. Note $(c)\Rightarrow (f)$, see 3.12.19 $(d)$, page 237 in \cite{re}.

\noindent
$(f)\Rightarrow(a)$. A  $\check{C}$ech-complete space is of pointwise countable type and the property of being pointwise countable type is preserved by open continuous maps. Hence $C_{h}(X)$ is of pointwise countable type and consequently by Theorem \ref{metrizability of C_h(X)}, $C_{h}(X)$ is metrizable and hence $C_{h}(X)$ is paracompact. But a paracompact open image of a $\check{C}$ech-complete space is $\check{C}$ech-complete (see 5.5.8 (b), page 341 in \cite{re}). Hence $C_{h}(X)$ is $\check{C}$ech-complete. But a metrizable and $\check{C}$ech-complete space is completely metrizable.

\noindent $(g) \Rightarrow(a)$. If $C_{h}(X)$ is partition-complete, then by Propositions 4.4 and 4.7 in \cite{m4}, $C_{h}(X)$ contains a dense $\check{C}$ech-complete subspace. But a $\check{C}$ech-complete space is of pointwise countable type. Hence  by Theorems \ref{metrizability of C_h(X)}, $C_{h}(X)$ is metrizable. But by Theorem 1.5 of \cite{m3} and Proposition 2.1 in \cite{m4}, a metrizable space is completely metrizable if and only if it is partition-complete. Hence $C_{h}(X)$ is completely metrizable.

\noindent $(a)\Rightarrow (j)$. If $C_h(X)$ is completely metrizable, then Theorem 3.7 in \cite{AMK} implies that $X^0$ is $G_\delta$-dense in $X$. Hence by Theorem \ref{metrizability of C_h(X)}, $X$ is a countable discrete space.

\noindent $(j)\Rightarrow (a)$.  Since a countable product of completely metrizable spaces is completely metrizable,  Lemma \ref{XdiscreteChXandRXHomeomorphism} implies that $C_h(X)$ is completely metrizable.

\noindent $(h)\Leftrightarrow (i)\Leftrightarrow (j)$. This follows from Theorem 8.6 in \cite{LM}.
\end{proof}

The next theorem can be proved in a manner similar to Theorem \ref{complete_metrizability_of_Ch(X)}, except we need to use here Theorem \ref{metrizability of C_ph(X)} in place of Theorem \ref{metrizability of C_h(X)}.
\begin{theorem}
For any space $X$, the following are equivalent.
\begin{enumerate}
\item[$(a)$] $C_{ph}(X)$ is completely metrizable.
\item[$(b)$] $C_{ph}(X)$ is $\check{C}$ech-complete.
\item[$(c)$] $C_{ph}(X)$ is locally $\check{C}$ech-complete.
\item[$(d)$] $C_{ph}(X)$ is sieve-complete.
\item[$(e)$] $C_{ph}(X)$ is an open continuous image of a paracompact $\check{C}$ech-complete space.
\item[$(f)$] $C_{ph}(X)$ is an open continuous image of a $\check{C}$ech-complete space.
\item[$(g)$] $C_{ph}(X)$ is partition-complete.
\item[$(h)$] $C_{p}(X)$ is completely metrizable.
\item[$(i)$] $C_{p}(X)$ is $\check{C}$ech-complete.
\item[$(j)$] $X$ is a countable discrete space.
\end{enumerate}
\end{theorem}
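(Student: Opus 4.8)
The plan is to run the same cycle of implications as in the proof of Theorem \ref{complete_metrizability_of_Ch(X)}, replacing Theorem \ref{metrizability of C_h(X)} by Theorem \ref{metrizability of C_ph(X)} at every place where the metrizability dictionary is invoked, and using Proposition \ref{comparison of topologies} to handle the discrete case. The soft implications require no new idea: $(a)\Rightarrow(b)$ because a completely metrizable space is $\check{C}$ech-complete; $(b)\Rightarrow(c)$, $(b)\Rightarrow(d)$ and $(d)\Rightarrow(g)$ from the discussion preceding Theorem \ref{complete_metrizability_of_Ch(X)}; $(a)\Rightarrow(e)\Rightarrow(f)$ immediately, since a completely metrizable space is paracompact and $\check{C}$ech-complete and is an open continuous image of itself; and $(c)\Rightarrow(f)$ by 3.12.19$(d)$, page 237 in \cite{re}. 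The block $(h)\Leftrightarrow(i)\Leftrightarrow(j)$ is Theorem 8.6 in \cite{LM}; it involves only $C_p(X)$, so it carries over verbatim.

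The two implications carrying the actual weight are $(f)\Rightarrow(a)$ and $(g)\Rightarrow(a)$. For $(f)\Rightarrow(a)$: a $\check{C}$ech-complete space is of pointwise countable type, a property preserved by open continuous surjections, so $C_{ph}(X)$ is of pointwise countable type; by Theorem \ref{metrizability of C_ph(X)} it is then metrizable, hence paracompact; a paracompact open image of a $\check{C}$ech-complete space is $\check{C}$ech-complete by 5.5.8$(b)$, page 341 in \cite{re}; and a metrizable $\check{C}$ech-complete space is completely metrizable. For $(g)\Rightarrow(a)$: by Propositions 4.4 and 4.7 in \cite{m4} a partition-complete space contains a dense $\check{C}$ech-complete subspace, in particular a dense subspace of pointwise countable type, so Theorem \ref{metrizability of C_ph(X)} again forces $C_{ph}(X)$ to be metrizable; and by Theorem 1.5 of \cite{m3} together with Proposition 2.1 in \cite{m4}, a metrizable space is completely metrizable if and only if it is partition-complete.

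It remains to link the top with the bottom. For $(a)\Rightarrow(j)$: complete metrizability of $C_{ph}(X)$ forces $X^0$ to be $G_\delta$-dense in $X$ by Theorem 3.8 in \cite{AMK}, and then Theorem \ref{metrizability of C_ph(X)} gives that $X$ is a countable discrete space. For $(j)\Rightarrow(a)$: if $X$ is a countable discrete space, then $C_{ph}(X)=C_h(X)$ by Proposition \ref{comparison of topologies}, and by Lemma \ref{XdiscreteChXandRXHomeomorphism} this space is homeomorphic to a countable power of $\mathbb{R}_d$, hence completely metrizable.

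I do not anticipate a genuine obstacle: the content is already concentrated in Theorem \ref{metrizability of C_ph(X)}, Proposition \ref{comparison of topologies} and the classical completeness theory, so the remaining task is purely organisational. The one point to watch is that each appeal to Theorem \ref{metrizability of C_ph(X)} is fed its hypothesis in exactly the form that theorem supplies (pointwise countable type, or a dense subspace thereof), and that in $(j)\Rightarrow(a)$ one first passes to $C_h(X)$ via Proposition \ref{comparison of topologies} before invoking Lemma \ref{XdiscreteChXandRXHomeomorphism}, which is stated only for $C_h$.
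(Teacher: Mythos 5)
Your proposal is exactly the paper's intended argument: the paper proves this theorem by declaring it "similar to Theorem \ref{complete_metrizability_of_Ch(X)}, using Theorem \ref{metrizability of C_ph(X)} in place of Theorem \ref{metrizability of C_h(X)}," and you have carried out that substitution faithfully, including the correct extra step in $(j)\Rightarrow(a)$ of first invoking Proposition \ref{comparison of topologies} to identify $C_{ph}(X)$ with $C_h(X)$ before applying Lemma \ref{XdiscreteChXandRXHomeomorphism}, and the use of Theorem 3.8 of \cite{AMK} (rather than 3.7) in $(a)\Rightarrow(j)$. No discrepancy with the paper's route.
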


\end{document}